\newtheorem{theorem}{Theorem}[section]
\newtheorem{corollary}[theorem]{Corollary}
\theoremstyle{definition}
\theoremstyle{remark}
\numberwithin{equation}{section}
\newcommand{\C}{\mathbb{C}}
\newcommand{\D}{\mathbb{D}}
\renewcommand{\phi}{\varphi}
\newcommand{\R}{\mathbb{R}}
\newcommand{\W}{W_{\psi,\phi}}
\newcommand{\Z}{\mathbb{Z}}
\newcommand{\vstrut}{\rule{0in}{.15in}}
\begin{document}

 \title[Invertible weighted composition operators]{Invertible weighted composition operators}

%    Only \author and \address are required; other information is
%    optional.  temove any unused author tags.

%    author one information
% \author[short version for running head]{name for top of paper}
\author{Paul S.\ Bourdon}
\address{Mathematics Department\\ Washington and Lee University\\Lexington, VA 24450}
\curraddr{}
\email{bourdonp@wlu.edu}
\thanks{}

%    author two information
%\author{}
%\address{}
%\curraddr{}
%\email{}
%\thanks{}

%    \subjclass is required.
\subjclass[2010]{Primary: 47B33; 30J99}

\date{}

\dedicatory{}

%    "Communicated by" -- provide editor's name; required.
%\commby{Richard Rochberg}

%    Abstract is required.
\begin{abstract}   Let $X$ be a set of analytic functions on the open unit disk $\D$, and let $\phi$ be an analytic function on $\D$ such that $\phi(\D)\subseteq \D$ and  $f\mapsto f\circ \phi$ takes $X$ into itself.  We present conditions on $X$ ensuring that if $f\mapsto f\circ\phi$ is invertible on $X$, then $\phi$ is an automorphism of $\D$, and we derive a similar result for  mappings of the form $f\mapsto \psi\cdot (f\circ \phi)$, where $\psi$ is some analytic function on $\D$.    We obtain as corollaries of this purely function-theoretic work,  new  results concerning invertibility of composition operators and weighted composition operators on Banach spaces of analytic functions such as $S^p$ and the weighted Hardy spaces $H^2(\beta)$.\end{abstract}

\maketitle

\section{Introduction}

Motivation for this paper derives from two sources: Theorems 1.6 and 2.15 of \cite{CMB},  which provide a condition ensuring that if a composition operator on a weighted Hardy space of the unit disk $\D$ is invertible, then its symbol is an automorphism of $\D$, and Theorem 2.0.1 of \cite{GGPP},  which characterizes invertible weighted composition operators on the classical Hardy space of the disk (and, by the same method, additional weighted Hardy space \cite[p.\ 860]{GGPP}).  These invertibility theorems are produced with the aid of reproducing kernels for the spaces in question.  We obtain here more general results, as corollaries of theorems on invertibility of composition operators and weighted composition operators on {\em sets} of analytic functions without linear or norm structure and hence without reproducing kernels.  Our work permits us to completely characterize invertibility of composition operators and weighted composition operators on automorphism-invariant functional Banach spaces such as $S^p$, which consists of analytic functions on $\D$ having derivatives in the Hardy space $H^p(\D)$.   We also show that if a composition operator $f\mapsto f\circ \phi $ or weighted composition operator  $f\mapsto \psi\cdot (f\circ\phi)$ on any weighted Hardy space $H^2(\beta)$ is invertible, then $\phi$ must be an automorphism of $\D$.   

 Throughout this paper $\psi$ and $\phi$  represent  analytic functions on $\D$, with $\phi$ having the additional property $\phi(\D)\subseteq \D$.  Thus, $\phi$ denotes an analytic selfmap of $\D$.   Let $X$ be a {\em set} of analytic functions on $\D$.  We emphasize that $X$ is not assumed to have linear or norm structure.  For example, $X$ might be  the set $X_{nz}$ of analytic functions on $\D$ that vanish at no point of $\D$.   We say the selfmap $\phi$ of $\D$ induces a composition operator $C_\phi$ on $X$ provided $C_\phi f:=f\circ \phi$ belongs to $X$ whenever $f\in X$. If $\phi$ and $\psi$ are such that $W_{\psi,\phi} f :=\psi\cdot (f\circ \phi)$ belongs to $X$ whenever $f\in X$, then we say $\psi$ and $\psi$ induce a weighted composition operator $W_{\psi, \phi}$ on $X$.   Observe that any analytic selfmap $\phi$ on $\D$ induces a composition operator on $X_{nz}$ and if $\psi$ is nonzero on $\D$, then $W_{\psi, \phi}$ will be a weighted composition operator on $X_{nz}$.  

If $X$ is a vector space, then any composition operator or weighted composition operator defined on $X$ will be linear.  Composition operators on normed linear spaces $X$ have been studied extensively (see, e.g., the texts \cite{CMB} and \cite{SB}) with issues such as boundedness, compactness, cyclicity, and spectral behavior receiving considerable attention.  Similar studies of weighted composition operators have been undertaken  (see, e.g. \cite{AFC}, \cite{BtJOT},  \cite{GalP}, \cite{GGC},  \cite{GG}, \cite{GGPP}, and  \cite{VM}).  In these studies of composition and weighted composition operators, the space $X$ in question is most often a weighted Hardy space. 

   A Hilbert space comprising functions analytic on $\D$ in which the polynomials are dense and the monomials $1$, $z$, $z^2$, \ldots, constitute an orthogonal set of nonzero vectors is  a {\it weighted Hardy space}.  Each weighted Hardy space is characterized by its {\it weight sequence} $\beta$ defined by  $\beta(j) = \|z^j\|$ for  $j\ge 0$.  The weighted Hardy space $H^2(\beta)$ consists of those functions $f$ analytic on $\D$ whose Maclaurin coefficients $(\hat{f}(j))$ satisfy
$$
\sum_{j=0}^\infty |\hat{f}(j)|^2\beta(j)^2 < \infty.
$$
The inner product of $H^2(\beta)$ is given by
$$
\langle f, g\rangle = \sum_{j=0}^\infty \hat{f}(j)\overline{\hat{g}(j)}\beta(j)^2.
$$
If $\beta(j) = 1$ for all $j$, then $H^2(\beta)$ is the classical Hardy space $H^2$ of the disk; the choices $\beta(j) = (j+1)^{-1/2}$ and $\beta(j) = (j+1)^{1/2}$ yield, respectively, the classical Bergman and Dirichlet spaces of the disk.  As is customary,  we make the normalizing assumption that  $\beta(0) = 1$.    It's not difficult to show that requiring functions in $H^2(\beta)$ to be analytic on $\D$ is equivalent to requiring 
  $\liminf \beta(j)^{1/j} \ge 1$ (see, e.g., exercise 2.1.10 of \cite{CMB}).

 Theorems 1.6 and 2.15 of \cite{CMB} combine to show that if $C_\phi$ is a bounded invertible operator on $H^2(\beta)$ and 
\begin{equation}\label{CFI}
\sum_{n=0}^\infty \frac{1}{\beta(j)^2} = \infty
\end{equation}
then $\phi$ must be an automorphism of $\D$.  This condition is generalized  in \cite[Theorem 2]{MP}, where $\sum_{n=0}^\infty n^{2k}/\beta(n)^2 =\infty$ for some $k\ge 0$ is shown to be sufficient to imply that any Fredholm composition operator $C_\phi$ must have its symbol $\phi$ be an automorphism. Here we show that for {\em every} weighted Hardy space $H^2(\beta)$, if $C_\phi: H^2(\beta)\rightarrow H^2(\beta)$ is invertible, then $\phi$ must be an automorphism of $\D$---see Theorem~\ref{WHST} below.

 The methods used to prove Theorem 2.0.1  of   \cite{GGPP} show $\phi$ to be an automorphism  if $W_{\psi,\phi}$ is  bounded and invertible on a range of weighted Hardy spaces including the classical Hardy space $H^2(\D$) and the standard-weight Bergman spaces $L^2_a(\D, (1-|z|^2)^\delta\ dA/\pi)$, $\delta > -1$.  Here, we obtain the same result for all weighed Hardy spaces (Theorem~\ref{WHST}).     These applications of our work to weighted Hardy spaces are presented in Section 3.  Also, in Section 3, we discuss how, for automorphism-invariant spaces like $S^p$, our work completely characterizes invertible composition operators and weighted composition operators.

\section{Invertibility of $C_\phi$ and $W_{\psi, \phi}$ on sets of analytic functions}

The goal of this section is to prove the following theorem as well as a similar one for weighted composition operators. Recall that $\phi$ is an analytic selfmap of the open unit disk $\D$. 

\begin{theorem}\label{MT1} Suppose that  $X$ is a set of analytic functions on $\D$ such that (i) $X$ is invariant under composition with $\phi$, (ii) $X$ contains a univalent function, (iii) $X$ contains a nonconstant function analytic on a neighborhood of the closed disk, and (iv) there is a dense subset $S$ of the unit circle such that for each point in $S$ there is function in $X$ that does not extend analytically to a neighborhood of that point.  If $C_\phi$ is invertible on $X$, then $\phi$ is an automorphism of the disk $\D$.
\end{theorem}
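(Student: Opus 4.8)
The plan is to reduce the statement to one geometric fact about $\phi$ and then play off hypotheses (iii) and (iv). First, surjectivity of $C_\phi$ together with (ii) already forces $\phi$ to be univalent: if $u\in X$ is univalent and $f\in X$ satisfies $f\circ\phi=u$, then $\phi(z_1)=\phi(z_2)$ yields $u(z_1)=u(z_2)$, hence $z_1=z_2$. So I would assume $\phi$ univalent but, for contradiction, not an automorphism, and set $\Omega:=\phi(\D)$ (a simply connected domain with $\Omega\subsetneq\D$) and $\gamma:=\partial\Omega\cap\D\neq\emptyset$. The heart of the matter is the claim that $\phi$ continues analytically to a neighborhood of some open subarc $J$ of $\partial\D$ with $\phi(J)\subseteq\D$. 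Granting this, the proof ends at once: every $f\in X$ can be written $f=C_\phi\tilde f=\tilde f\circ\phi$ by surjectivity, and since $\phi$ continues past $J$ into the open disk where $\tilde f$ is analytic, $f$ continues analytically past $J$; picking (by density of $S$) a point $\zeta\in S\cap J$, the function $f_\zeta$ furnished by (iv) then continues past $\zeta$, contrary to its choice.

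To prove the claim I would study the boundary behavior of the conformal map $\phi^{-1}:\Omega\to\D$. Because $\phi$ is a univalent self-map of $\D$ that is not an automorphism, it is not inner (a univalent inner function is an automorphism of $\D$), so the set $E=\{\zeta\in\partial\D:\phi\text{ has a nontangential limit }\phi^*(\zeta)\text{ with }|\phi^*(\zeta)|<1\}$ has positive Lebesgue measure; for $\zeta\in E$ the point $p:=\phi^*(\zeta)$ lies in $\gamma$ (it belongs to $\overline{\Omega}$, and it cannot lie in $\Omega$, since then continuity of $\phi^{-1}$ at $p$ would force $r\zeta=\phi^{-1}(\phi(r\zeta))\to\phi^{-1}(p)\in\D$, impossible as $r\zeta\to\zeta$). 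Next I bring in the function $g\in X$ from (iii), nonconstant and analytic on a neighborhood of $\overline{\D}$, and put $H:=C_\phi^{-1}g\in X$; then $H\circ\phi=g$, so $H=g\circ\phi^{-1}$ on $\Omega$, with $H$ analytic on all of $\D$ (hence across $\gamma$) and nonconstant. Since $g$ has only finitely many critical points on $\partial\D$, $H'$ has only countably many zeros in $\D$, and each level set $\{\zeta:\phi^*(\zeta)=q\}$ with $q\in\D$ is Lebesgue-null (otherwise the bounded function $\phi-q$ would vanish nontangentially on a set of positive measure, hence identically), I may choose $\zeta_0\in E$ that is not a critical point of $g$ and with $H'(p_0)\neq0$, where $p_0:=\phi^*(\zeta_0)\in\gamma$. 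Taking a small disk $N$ about $\zeta_0$ on which $g$ is biholomorphic with local inverse $g_N^{-1}$, the function $\Phi:=g_N^{-1}\circ H$ is analytic on a neighborhood of $p_0$ and satisfies $\Phi=\phi^{-1}$ on the open set $\phi(N\cap\D)\subseteq\Omega$. Since $\phi(r\zeta_0)\in\phi(N\cap\D)$ for $r$ near $1$ and $\phi(r\zeta_0)\to p_0$, this open set accumulates at $p_0$, so (by the identity theorem on the component of $\Omega$ abutting $p_0$) $\Phi$ is an analytic continuation of $\phi^{-1}$ across $\gamma$ near $p_0$, with $\Phi(p_0)=\zeta_0\in\partial\D$ and $\Phi'(p_0)=H'(p_0)/g'(\zeta_0)\neq0$. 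Thus $\Phi$ is biholomorphic near $p_0$; since $\phi=\Phi^{-1}$ on the open subset of $\D$ onto which $\phi^{-1}$ maps that component, and that subset accumulates at $\zeta_0=\Phi(p_0)$, the identity theorem shows that $\Phi^{-1}$, analytic on a full neighborhood of $\zeta_0$, agrees with $\phi$ on $\D$ near $\zeta_0$; as $\Phi^{-1}(\zeta_0)=p_0\in\D$, this is exactly the claim.

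The main obstacle is this boundary analysis. It is delicate to pass from the soft fact that $g\circ\phi^{-1}$ extends analytically across $\gamma$ to a genuine, locally conformal continuation of $\phi^{-1}$ (and hence of $\phi$), because a priori $\phi^{-1}$ only possesses a cluster set at points of $\gamma$. The things one must get right are that the cluster set of the conformal map $\phi^{-1}$ at a boundary point of $\Omega$ lies in $\partial\D$ (so the candidate $g_N^{-1}\circ H$ really is the branch matching $\phi^{-1}$ on the $\Omega$-side of $p_0$), and that $\Phi'(p_0)\neq0$. This is where hypothesis (iii) is indispensable: a nonconstant function analytic past $\overline{\D}$ is locally invertible away from a finite set, which is what lets one solve for $\phi^{-1}$; hypothesis (ii) provides univalence, and (iv) converts the continuation past an arc into the final contradiction. (Should one prefer not to cite that a univalent inner function is an automorphism, the one remaining case, in which $\D\setminus\Omega$ is polar, can be handled directly: $\phi^{-1}$ then continues across the polar set to a covering map of $\D$ onto $\D$, forcing $\phi$ to be an automorphism.)
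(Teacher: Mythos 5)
Your proof is correct and follows essentially the same route as the paper: univalence from (ii); then the $C_\phi$-preimage $H$ of the boundary-regular function $g$ from (iii) is used to continue $\phi$ analytically across a boundary point where the radial limit lies in $\D$ (your $\Phi^{-1}=H^{-1}\circ g$, locally, is exactly the paper's extension $f^{-1}\circ h$), which contradicts (iv) via surjectivity of $C_\phi$, and the argument closes with the fact that a univalent inner function is an automorphism. The only cosmetic differences are that you first continue $\phi^{-1}$ and then invert (so you additionally require $g'(\zeta_0)\neq 0$), and you select the good boundary point via null level sets rather than the paper's observation that $\phi(T)$ is infinite and so avoids the zeros of the derivative.
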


Note that any set $X$ containing $f(z) = z$, such as any weighted Hardy space, immediately satisfies hypotheses (ii) and (iii) of the preceding theorem.   Before proving Theorem~\ref{MT1}, we point out that some additional hypotheses on $X$ such as those provided in its statement are needed to ensure that if $C_\phi$ is invertible on $X$, then $\phi$ is an automorphism of $\D$.  Assuming simply that $X$ contains a nonconstant function is not sufficient.     For instance, if $X$ is the set of entire functions, then the non-automorphism $\phi(z) = z/2$ will induce an invertible composition operator on $X$.  

 To show that a hypothesis like (ii) is needed, we rely on eigenfunctions for composition operators.   Whenever $\phi$ fixes a point $\omega$ on the unit circle and $\phi'(\omega)< 1$, $C_\phi$ will have nonconstant eigenfunctions (see, e.g., Lemma 7.24 of \cite{CMB}). Also if  $\phi$ is a non-automorphic selfmap of $\D$ satisfying $\phi(0) = 0$ and $\phi'(0) \ne 0$; then $\phi$ has a {\it Koenigs eigenfunction}   $\sigma$ (see, e.g., \cite[\S 6.1]{SB} ), which is a holomorphic function on $\D$ satisfying 
$$
C_\phi \sigma = \phi'(0)\sigma.
$$
Let $\{(f_\alpha, \lambda_\alpha): \alpha\in A\}$, be an indexed collection of eigenfunction-eigenvalue pairs for $C_\phi$.  Assume that $\phi$ is not constant so that no eigenvalue $\lambda_a$ is zero.  Define
  \begin{equation}\label{KES}
X = \bigcup_{\alpha \in A} \{\lambda_\alpha^kf_\alpha:k\in \Z\}
\end{equation}
and observe that $C_\phi$ is invertible on $X$.  
Note that choosing $\phi$ to be non-univalent makes any of its eigenfunctions non-univalent.

\begin{proof}[Proof of Theorem~\ref{MT1}] Suppose that $C_\phi$ is invertible on $X$.   Because $X$ contains a univalent function $g$ and $C_\phi$ is invertible, there is a function $q\in X$ such that $q\circ \phi = g$.  Thus $g$ would identify any two points identified by $\phi$, and thus $\phi$ must be univalent.   

 Because $X$ contains a nonconstant  function $h$ that is analytic on the closed disk and $C_\phi$ is invertible, there is a function $f\in X$ such that $f\circ \phi = h$.  Suppose, in order to obtain a contradiction, that $\phi$ has radial limit of modulus less than $1$ on a subset $E$ of $\partial \D$ having positive Lebesgue measure.  Because $E$ has positive measure,  there is a positive number $t$ less than $1$ such that  measure of the set $T:= \{\zeta\in \partial \D:  |\phi(\zeta)| < t\}$ is also positive.  Note that the set $\phi(T)$ cannot be finite for then $\phi$ would map a subset of $T$ having positive measure to a single point, making $\phi$ constant (contradicting its univalence). Because $f\circ \phi$ equals $h$, a nonconstant function, $f$ must also be nonconstant.  Thus its derivative must be nonzero at a point $\phi(\zeta_0)$ of $\phi(T)$. Thus there is a disk $D_1$ centered at $\phi(\zeta_0)$ and contained in $\D$ on which $f$ is invertible with inverse $f^{-1}$.   Recalling that $h$ is analytic on the closed disk and that $f\circ \phi = h$, we must have $f(\phi(\zeta_0)) = h(\zeta_0)$.  Thus, $f(D_1)$ is a neighborhood of $h(\zeta_0)$ and it follows that $f^{-1}\circ h$ is analytic on an open disk $D_2$ centered at $\zeta_0$.  However for $r\in [0,1)$ sufficiently close to $1$, $r\zeta$ will be in $D_2$ and $\phi(r\zeta)$ will be in $D_1$; thus, for such $r$,
$\phi(r\zeta) = (f^{-1}\circ h)(r\zeta)$.  It follows that $f^{-1}\circ h$ is an analytic extension of $\phi$ to $D_2$.  Thus, $\phi$ extends analytically to a function $\tilde{\phi}$ analytic on $\D\cup D_2$. Because $\tilde{\phi}(D_2) = f^{-1}(h(D_2))$ is contained in the range of $f^{-1}$ which is, in turn, contained in $D_1\subseteq \D$, we see that $\tilde{\phi}$ maps $D_2$ into $\D$.  Hence $\tilde{\phi}$ maps $\D\cup D_2$ into $\D$.

  Let $f\in X$ be arbitrary.  Then $f\circ \phi$ has analytic extension $f\circ \tilde{\phi}$ to $\D\cup D_2$.    Because $C_\phi$ is invertible on $X$, we conclude that every function in $X$ has analytic extension to $\D\cup D_2$, contrary to hypothesis (iv).  This contradiction tells us that $\phi$ must have radial limit of modulus $1$ a.e.\ on $\partial \D$; that is, $\phi$ is an inner function.  Since univalent inner functions must be automorphisms (see, e.g., \cite[Corollary 3.28]{CMB}), our proof is complete.
\end{proof}  

With somewhat stronger hypotheses on the set $X$, we  obtain a version of Theorem~\ref{MT1} applying to weighted composition operators.

\begin{theorem}\label{MT2}
  Suppose that $X$ is a set of functions analytic on $\D$ such that (i)   $W_{\psi, \phi}$ maps $X$ to $X$, (ii)  $X$ contains a nonzero constant function, (iii) $X$ contains a function of the form $z\mapsto z+ c$ for some constant $c$, (iv)   there is a dense subset $S$ of the unit circle such that for each point in $S$ there is function in $X$ that does not extend analytically to a neighborhood of that point.    If $W_{\psi, \phi}:  X\rightarrow X$ is  invertible, then $\phi$ is an automorphism of $\D$.  
\end{theorem}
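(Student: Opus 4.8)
The plan is to adapt the argument of Theorem~\ref{MT1}, using the multiplicative factor $\psi$ to compensate for the absence of a univalent function in $X$. First I would extract basic information about $\psi$ and $\phi$ from invertibility. Since $X$ contains a nonzero constant $c_0$, invertibility of $\W$ gives $f_0\in X$ with $\psi\cdot(f_0\circ\phi)=c_0$; in particular $\psi$ is nonvanishing on $\D$ and $f_0$ is nonvanishing on $\D$. Next, applying invertibility to the function $z\mapsto z+c$ of hypothesis~(iii), there is $f_1\in X$ with $\psi(z)\,(f_1(\phi(z))+c)=\phi(z)+c$ for all $z$, so that $\phi=\psi\cdot(f_1\circ\phi)+c(\psi-1)$; combining this with the relation $\psi\cdot(f_0\circ\phi)=c_0$ we can solve for $\phi$ in terms of $\psi$ and compositions with $\phi$, but more usefully we get $\phi+c = \psi\cdot\bigl((f_1+c)\circ\phi\bigr)$, i.e. $(f_1+c)\circ\phi = (\phi+c)/\psi$. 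This exhibits $(\phi+c)/\psi$ as a function of the form $g\circ\phi$ with $g=f_1+c$ that equals the \emph{univalent} function $z\mapsto z+c$ composed with... no: rather it shows that whenever $\phi$ identifies two points $z_1,z_2$, then $(\phi+c)/\psi$ does too, which is not yet enough for univalence. Instead the cleaner route is: from $\psi\cdot(f_0\circ\phi)=c_0$ and $\psi\cdot\bigl((f_1+c)\circ\phi\bigr)=\phi+c$, divide to get $\bigl((f_1+c)/f_0\bigr)\circ\phi = (\phi+c)/c_0$, a univalent function of $z$; since the left side is constant on the fibers of $\phi$, the right side is too, forcing $\phi$ to be univalent.

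Once $\phi$ is known to be univalent, I would run the analytic-continuation argument exactly as in the proof of Theorem~\ref{MT1}. Suppose for contradiction that $\phi$ has radial limit of modulus less than $1$ on a subset of $\partial\D$ of positive measure; as before this yields a $t<1$ with $T:=\{\zeta\in\partial\D:|\phi(\zeta)|<t\}$ of positive measure and $\phi(T)$ infinite. Apply invertibility of $\W$ to the function $z\mapsto z+c$ again: the resulting $f_1\in X$ satisfies $\psi(z)\bigl(f_1(\phi(z))+c\bigr)=\phi(z)+c$. Here $f_1+c$ is nonconstant (since $\phi+c$ is and $\psi$ is analytic), so $(f_1+c)'$ is nonzero at some point $\phi(\zeta_0)$ with $\zeta_0\in T$; hence $f_1+c$ is invertible on a disk $D_1\subseteq\D$ about $\phi(\zeta_0)$. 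Now on a disk $D_2$ about $\zeta_0$ we have, for $r$ close to $1$, the identity $\phi(r\zeta)+c = \psi(r\zeta)\bigl((f_1+c)(\phi(r\zeta))\bigr)$, which rearranges to express $\phi(r\zeta)$ via $(f_1+c)^{-1}\bigl((\phi(r\zeta)+c)/\psi(r\zeta)\bigr)$; since $\psi$ and the right-hand data are analytic on a neighborhood of $\zeta_0$, this gives an analytic extension $\tilde\phi$ of $\phi$ to $\D\cup D_2$, mapping $D_2$ into $D_1\subseteq\D$, hence mapping $\D\cup D_2$ into $\D$.

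Finally, for arbitrary $f\in X$, the function $\W f = \psi\cdot(f\circ\phi)$ extends analytically to $\D\cup D_2$ (both $\psi$ and $f\circ\tilde\phi$ do); since $\W$ is invertible on $X$, every function in $X$ extends analytically to $\D\cup D_2$, contradicting hypothesis~(iv). Therefore $\phi$ is inner, and being univalent it is an automorphism of $\D$ by \cite[Corollary 3.28]{CMB}. The main obstacle is the first step: unlike in Theorem~\ref{MT1}, $X$ is not assumed to contain a univalent function, so univalence of $\phi$ must be squeezed out of the two relations coming from the constant function and the linear function $z\mapsto z+c$; the key observation is that the quotient $\bigl((f_1+c)/f_0\bigr)\circ\phi$ equals the univalent map $(\phi+c)/c_0$, and one must check that division is legitimate, which is exactly why the nonvanishing of $f_0$ (equivalently of $\psi$) obtained from hypothesis~(ii) is essential. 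The continuation step is then a routine transcription of the Theorem~\ref{MT1} argument with the harmless analytic factor $\psi$ carried along.
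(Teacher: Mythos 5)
Your overall plan---get univalence of $\phi$ from the constant and the linear target functions, then show $\phi$ is inner by an extension argument contradicting (iv), then invoke ``univalent $+$ inner $\Rightarrow$ automorphism''---is exactly the paper's strategy, but the execution has a genuine defect at its core: the functional equation you extract from invertibility is not the right one, and the univalence argument built on it is circular. Solving $\W f_1 = (z\mapsto z+c)$ gives $\psi(z)\,f_1(\phi(z)) = z+c$, \emph{not} $\psi(z)\bigl(f_1(\phi(z))+c\bigr)=\phi(z)+c$; your equation amounts to $\W(f_1+c)=\phi+c$, which invertibility does not supply since $\phi+c$ is not assumed to lie in $X$. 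With your version, the identity you divide down to, $\bigl((f_1+c)/f_0\bigr)\circ\phi=(\phi+c)/c_0$, has \emph{both} sides of the form (function)$\circ\phi$, so both are automatically constant on the fibers of $\phi$ and nothing forces those fibers to be singletons; calling $(\phi+c)/c_0$ ``a univalent function of $z$'' assumes precisely what is to be proved. With the correct equations, dividing $\psi\cdot(f_1\circ\phi)=z+c$ by $\psi\cdot(f_0\circ\phi)=c_0$ gives $(f_1/f_0)(\phi(z))=(z+c)/c_0$, i.e.\ $q\circ\phi=\mathrm{id}$ on $\D$ for the meromorphic function $q:=c_0\,f_1/f_0-c$, and univalence of $\phi$ is immediate; this identity (with $\phi^{-1}$ meromorphically extended by $q$) is what the paper's proof runs on.

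The continuation step also fails as written. You propose to extend $\phi$ across $\zeta_0\in\partial\D$ via $\phi=(f_1+c)^{-1}\bigl((\phi+c)/\psi\bigr)$, but the right-hand side contains $\phi$ itself and $1/\psi$, and neither $\phi$ nor $\psi$ is known to be analytic in any neighborhood of the boundary point $\zeta_0$, so the formula is circular and yields no extension. The step that works is to invert the \emph{outer} function in $q\circ\phi=\mathrm{id}$: since $q(\phi(r\zeta))=r\zeta$ forces $|q(\phi(\zeta))|=1$ for $\zeta\in T$, the cluster points of $\phi(T)$ avoid the poles of $q$; choose $\phi(\zeta_0)\in\phi(T)$ where $q$ is analytic with $q'\neq 0$, and the local inverse $q^{-1}$, defined on a disk about $\zeta_0=q(\phi(\zeta_0))$ and agreeing with $\phi$ along a radius, extends $\phi$ analytically across $\zeta_0$ with values inside $\D$. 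Finally, your contradiction with (iv) quietly assumes that $\psi$ extends analytically to $\D\cup D_2$ (``both $\psi$ and $f\circ\tilde\phi$ do''), which is unjustified and is a genuine extra step in the paper: from $\psi=c_0/(f_0\circ\tilde\phi)$ one gets only a \emph{meromorphic} extension of $\psi$, and one must pass to a smaller disk centered at a boundary point avoiding its isolated poles before concluding that $\W f$, and hence by invertibility every member of $X$, extends across the circle. (Minor: $f_0$ is nonvanishing on $\phi(\D)$, not necessarily on all of $\D$; that is all the quotient argument needs.)
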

\begin{proof}      Let $W_{\psi, \phi}:  X\rightarrow X$  be invertible. Let $c$ be constant such that $g(z) = z +c$ belongs to $X$ and let $c_1\ne 0$ be a constant such that $h(z) = c_1$ belongs to $X$.       Because $W_{\psi , \phi}$ is invertible and $h\in X$,  there is a function $f_1\in X$ such that $\W f_1 = h$, from which it follows that $\psi(z) = c_1/f_1(\phi(z))$ for each $z\in \D$.  Also, there is a function $f_2\in X$ such that $\W f_2 = g$ so that $\psi(z)f_2(\phi(z)) = z + c$ for all $z\in \D$ or
$$
c_1\frac{f_2}{f_1}(\phi(z))- c = z
$$
for each $z\in \D$.   It follows that $\phi$ must be univalent on $\D$.  Moreover,  $\phi^{-1}$ has a meromorphic extension  $q:=c_1\frac{f_2}{f_1} - c$ from $\phi(\D)$ to $\D$.

 Suppose, in order to obtain a contradiction that $\phi$ has radial limit of modulus less than $1$ on a subset $E$ of $\partial D$ having positive measure.  Because $E$ has positive measure,  there is a positive number $t$ less than $1$ such that  measure of the set $T:= \{\zeta\in \partial \D:  |\phi(\zeta)| < t\}$ is also positive.   Because $\phi$ is nonconstant, the set $\phi(T)$ cannot be finite. Because $q(\phi(r\zeta)) = r\zeta$ for each $r\in [0 ,1)$, we see that $\phi(\zeta)$ cannot be a pole of $q$ for any $\zeta\in T$; in fact $|q(\phi(\zeta))| = 1$ for $\zeta\in T$ assures us that no cluster point of $\phi(T)$ is a pole of $q$.  Because $q$ is not constant, there is a point $\phi(\zeta_0)$  of $\phi(T)$ at which $q$ has nonzero derivative. Thus there is a an open disk $D_0$ centered at $\phi(\zeta_0)$  and contained in $\D$ such that $q$ is invertible on $D_0$---that is, there is  an analytic function $q^{-1}$ on $q(D_0)$ such that $q^{-1}\circ q$ is the identity on $D_0$.    Since $q(\phi(\zeta_0)) = \zeta_0$, we see $q$ maps  $D_0$ to an  open set containing an open disk $D_1$ containing $\zeta_0$.   Since $q^{-1}(r\zeta_0) = \phi(r\zeta_0)$ for every $r$ such that $r\zeta_0\in D_1$, we see that $q^{-1}|_{D_1}$ is an analytic extension of $\phi$ from $\D\cap D_1$ to $D_1$. Thus $\phi$ has an analytic extension $\tilde{\phi}$ from $\D$ to  $\D\cup D_1$.
 
Recall that $\psi(z) = \frac{c_1}{f_1(\phi(z))}$ for each $z\in \D$.  Note that  $\tilde{\phi}(D_1)\subseteq \D$  because $ \tilde{\phi}(D_1) = q^{-1}(D_1)\subseteq D_0\subseteq \D$.  Thus,  the function $f_1\circ \tilde{\phi}$ is analytic on $\D\cup D_1$ and thus $\psi$, which is analytic on $\D$, has meromorphic extension  to $\D\cup D_1$. We choose a point $\zeta_1\in D_1\cap \partial \D$ at which this meromorphic extension of $\psi$ is analytic and a disk $D_2$ centered at $\zeta_1$ and contained in $D_1$ such that $\psi$ has analytic extension $\tilde{\psi}$ defined on $\D\cup D_2$.  Now observe that since $\tilde{\phi}$ maps $D_2$ into $\D$,  for any $f\in X$, $f\circ \phi$ extends to  be analytic on $\D\cup D_2$, agreeing with $f\circ \tilde{\phi}$. Hence for any $f\in X$, the product $\psi\cdot (f\circ \phi)$ has analytic extension $\tilde{\psi} f\circ\tilde{\phi}$ to $D\cup D_2$.  However, since $W_{\psi,\phi}$ is invertible on $X$, this yields that every function in $X$ extends analytically to $\D\cup D_2$, contrary to our hypothesis (iv).    From this contradiction,  it  follows that $\phi$ must have radial limit of modulus $1$ almost everywhere on $\partial \D$.  That is, $\phi$ is inner and hence must be an automorphism because it is univalent.  
\end{proof}  

If the set $X$ of analytic functions on $\D$ is automorphism invariant, that is, $f\circ \phi\in X$ whenever $f\in X$ and $\phi$ is an automorphism of $\D$, then Theorems~\ref{MT1} and \ref{MT2} characterize invertibility:  
\begin{corollary} If $X$ and $\phi$ satisfy the hypotheses of Theorem~\ref{MT1} and $X$ is  automorphism invariant, then $C_\phi$ is invertible on $X$ iff $\phi$ is an automorphism of $\D$. If $X$, $\psi$, and $\phi$ satisfy the hypotheses of Theorem~\ref{MT2} and $X$ is  automorphism invariant, then $W_{\psi, \phi}$ is invertible on $X$ iff $\phi$ is an automorphism of $\D$ and $\psi$ as well as $1/\psi$ are multipliers of $X$.
\end{corollary}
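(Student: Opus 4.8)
The plan is to treat the two biconditionals separately, each time noting that one implication is already delivered by the corresponding main theorem and the other is a routine consequence of automorphism invariance. For the composition operator statement, suppose first that $C_\phi$ is invertible on $X$. Since $X$ and $\phi$ satisfy the hypotheses of Theorem~\ref{MT1}, that theorem gives immediately that $\phi$ is an automorphism of $\D$. Conversely, suppose $\phi$ is an automorphism of $\D$. Then $\phi^{-1}$ is also an automorphism of $\D$, so by automorphism invariance of $X$ both $f\mapsto f\circ\phi$ and $f\mapsto f\circ\phi^{-1}$ map $X$ into $X$; that is, both $C_\phi$ and $C_{\phi^{-1}}$ are composition operators on $X$. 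Since $C_\phi C_{\phi^{-1}} f = f\circ\phi^{-1}\circ\phi = f$ and likewise $C_{\phi^{-1}} C_\phi f = f$ for every $f\in X$, the map $C_{\phi^{-1}}$ is a two-sided inverse for $C_\phi$ on $X$, so $C_\phi$ is invertible. This completes the first biconditional.

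For the weighted composition operator statement, suppose first that $W_{\psi,\phi}$ is invertible on $X$. Since $X$, $\psi$, and $\phi$ satisfy the hypotheses of Theorem~\ref{MT2}, that theorem gives that $\phi$ is an automorphism of $\D$. It remains to see that both $\psi$ and $1/\psi$ are multipliers of $X$, i.e.\ that $f\mapsto \psi f$ and $f\mapsto f/\psi$ each map $X$ into $X$. Since $\phi$ is an automorphism, $C_{\phi^{-1}}$ maps $X$ into $X$ by automorphism invariance, and then $W_{\psi,\phi}C_{\phi^{-1}} f = \psi\cdot\big((f\circ\phi^{-1})\circ\phi\big) = \psi f$, so $\psi$ is a multiplier of $X$. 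Note in particular this forces $\psi$ to be nonvanishing on $\D$: if $\psi(a)=0$ for some $a\in\D$, then every function in the range of $W_{\psi,\phi}$ vanishes at $a$, contradicting surjectivity of $W_{\psi,\phi}$ (for instance the nonzero constant guaranteed by hypothesis~(ii) of Theorem~\ref{MT2} is not attained). Hence $1/\psi$ is analytic on $\D$, and since $W_{\psi,\phi}$ is invertible its inverse sends $X$ into $X$; applying the inverse to $\psi f\in X$ (valid since $\psi$ is a multiplier, at least for $f$ in a suitable subset — see below) and composing appropriately recovers $f/\psi$, showing $1/\psi$ is a multiplier as well.

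The conversely direction is then the routine check: if $\phi$ is an automorphism and $\psi$, $1/\psi$ are multipliers of $X$, define $V f := (1/\psi)\cdot\big(f\circ\phi^{-1}\big)$ and verify using automorphism invariance and the multiplier property that $V$ maps $X$ into $X$, and that $VW_{\psi,\phi} f = (1/\psi)\cdot\big((\psi\cdot f\circ\phi)\circ\phi^{-1}\big) = (1/\psi)\cdot(\psi\circ\phi^{-1})\cdot f\circ\phi^{-1}\circ\phi$; here one must be slightly careful because $(\psi\cdot(f\circ\phi))\circ\phi^{-1} = (\psi\circ\phi^{-1})\cdot f$, so in fact $VW_{\psi,\phi}f = \tfrac{1}{\psi}\cdot(\psi\circ\phi^{-1})\cdot f$, which is not obviously $f$ unless $\psi\circ\phi^{-1} = \psi$. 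I expect this to be the main subtlety: the correct inverse is $W_{1/(\psi\circ\phi^{-1}),\,\phi^{-1}}$ rather than $W_{1/\psi,\phi^{-1}}$, and one needs that $1/(\psi\circ\phi^{-1})$ is again a multiplier of $X$ — which follows because $\psi\circ\phi^{-1}\in X$-multipliers whenever $\psi$ is (composition with the automorphism $\phi^{-1}$ preserving the multiplier algebra by automorphism invariance) together with the fact that its reciprocal is then handled as above. Once the bookkeeping on which weight appears is done correctly, both composites reduce to the identity on $X$ and invertibility of $W_{\psi,\phi}$ follows.
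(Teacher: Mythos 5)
Your proposal follows essentially the same route the paper intends: necessity of the automorphism condition comes straight from Theorems~\ref{MT1} and \ref{MT2}, and sufficiency (and the multiplier conditions) come from automorphism invariance together with the identity $\W^{-1} = W_{1/\psi\circ\phi^{-1},\,\phi^{-1}}$ and the observation that, for automorphism-invariant $X$, $1/\psi$ is a multiplier iff $1/(\psi\circ\phi^{-1})$ is --- exactly the remarks the paper records after the corollary. One computational slip to repair: applying $\W^{-1}$ to $\psi f$ gives $f\circ\phi^{-1}$, not $f/\psi$; to see that $1/\psi$ is a multiplier, apply $\W^{-1}$ to $f$ itself, obtaining $(f\circ\phi^{-1})/(\psi\circ\phi^{-1})\in X$, and then compose with $\phi$ (allowed by automorphism invariance) to conclude $f/\psi\in X$. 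With that adjustment, and with your corrected choice of weight $1/(\psi\circ\phi^{-1})$ in the candidate inverse for the converse direction, the argument is complete and agrees with the paper's.
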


 Recall that a function $g$ is a multiplier of a set $X$ provided that $gf\in X$ whenever $f\in X$.  Note that if $\phi$ is an automorphism and $X$ is an automorphism-invariant set, then $1/\psi$ is a multiplier of $X$ iff $1/\psi\circ\phi^{-1}$ is a multiplier.  It is easy to see that if $W_{\psi,\phi}$ is invertible on $X$ and the hypotheses of Theorem~\ref{MT2} hold, then $W_{\psi,\phi}^{-1} = W_{1/\psi\circ\phi^{-1}, \phi^{-1}}$.

\section{applications}

 Theorems~\ref{MT1} and \ref{MT2} of the previous section are widely applicable, yielding both old and new results.  For example, for $0 < p < \infty$,  they may be applied to composition operators and weighted composition operators  on the Hardy and Bergman spaces $H^p(\D)$ and $A^p(\D)$, where, at least for composition operators, the characterization of invertibility is well known  (see, e.g., \cite[Exercise 2.1.15 \& Theorem 1.6]{CMB}). 
  However our theorems also may be applied to many other spaces for which invertibility results are not in the literature; for instance,   the space $S^p$,  the Bloch space $\mathcal{B}$, the disk algebra, as well as the  Lipschitz spaces Lip$_\alpha(\D)$ ($0 < \alpha \le 1$).  (Definitions of all these function spaces may be found, e.g., in \cite[Chapter 4]{CMB}.)  Theorems~\ref{MT1} and \ref{MT2} also yield, with no restrictions on the weight sequence $\beta$, that invertibility of $C_\phi$ or $W_{\psi, \phi}$ on $H^2(\beta)$ implies $\phi$ is an automorphism of $\D$---see Theorem~\ref{WHST} below.    Before turning to this weighted-Hardy-space result, we record explicitly the consequences of our work for the spaces $S^p$ (on which composition operators are studied, in, e.g., \cite{BMS}, \cite{tt},  and \cite{JS}).
  
  Let $H(\D)$ be the collection of all analytic functions of $\D$ and for $0 < p < \infty$, let $H^p(\D)$ be the Hardy-$p$ space of $\D$, which consists of all $f\in H(\D)$ satisfying
\begin{equation}\label{HSD}
\|f\|_p^p : =  \sup_{0\le r < 1} \frac{1}{2\pi} \int_{\partial \D} |f(re^{it})|^p \, dt  < \infty.
 \end{equation}
  Recall that $S^p = \{f\in H(\D): f' \in H^p(\D)\}$.  Note that  for every $p$, the space $S^p$ contains $z\mapsto z$ as well as the constant function $z\mapsto 1$; moreover, $S^p$ contains antiderivatives of bounded analytic functions on $\D$.  Thus, there are functions in $S^p$ that don't extend analytically across any point of the unit circle (consider an antiderivative of a Blaschke product whose zero sequence accumulates at each point of $\partial \D$).  Thus, Theorems \ref{MT1} and  \ref{MT2} may be applied to $S^p$.  In fact, because $S^p$ is easily seen to be automorphism invariant, we obtain the following complete characterization of invertibility .

  \begin{theorem}  The composition operator $C_\phi$ is invertible on $S^p$ if and only if $\phi$ is an automorphism of $\D$.  Moreover, the weighted composition operator $W_{\psi, \phi}$ is invertible on $S_p$ if and only if $\phi$ is an automorphism of $\D$ and both $\psi$ and $1/\psi$ are multipliers of $S^p$.
  \end{theorem}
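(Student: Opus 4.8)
The plan is to obtain both equivalences as instances of the Corollary following Theorem~\ref{MT2}; thus essentially all the work is to check that $S^p$ meets the hypotheses of Theorems~\ref{MT1} and~\ref{MT2} and is automorphism invariant.

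First I would record that $z\mapsto z$ lies in $S^p$ (its derivative is the constant $1\in H^p(\D)$): it is univalent and analytic across $\overline{\D}$, so hypotheses (ii) and (iii) of Theorem~\ref{MT1} hold; and, being of the form $z\mapsto z+c$ with $c=0$, together with the nonzero constant $1\in S^p$ it supplies hypotheses (ii) and (iii) of Theorem~\ref{MT2}. For the shared hypothesis (iv) I would take a Blaschke product $B$ whose zero sequence accumulates at every point of $\partial\D$ and let $F$ be an antiderivative of $B$; then $F\in S^p$ since $B\in H^\infty(\D)\subseteq H^p(\D)$, and $F$ extends analytically across no point $\zeta\in\partial\D$, because any such extension would analytically continue $F'=B$ past $\zeta$, which is impossible as the zeros of $B$ accumulate at $\zeta$. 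So (iv) holds with $S=\partial\D$. Finally I would verify that $S^p$ is automorphism invariant: for an automorphism $\phi$, $\phi'$ is bounded on $\overline{\D}$ and $C_\phi$ is bounded on $H^p(\D)$, so if $f\in S^p$ then $(f\circ\phi)'=(f'\circ\phi)\,\phi'\in H^p(\D)$ by the chain rule, whence $f\circ\phi\in S^p$.

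With these verifications in place, the composition-operator statement follows: if $C_\phi$ is invertible on $S^p$ then $C_\phi$ maps $S^p$ into $S^p$, so hypothesis (i) of Theorem~\ref{MT1} holds and the Corollary forces $\phi$ to be an automorphism; conversely, if $\phi$ is an automorphism, then automorphism invariance makes $C_\phi$ and $C_{\phi^{-1}}$ mutually inverse selfmaps of $S^p$. Likewise for $W_{\psi,\phi}$: invertibility on $S^p$ supplies hypothesis (i) of Theorem~\ref{MT2}, and the Corollary then yields that $\phi$ is an automorphism and that both $\psi$ and $1/\psi$ are multipliers of $S^p$; conversely, if $\phi$ is an automorphism and $\psi,1/\psi$ are multipliers, then $W_{\psi,\phi}$ maps $S^p$ into $S^p$ (compose with $\phi$, then multiply by $\psi$), so hypothesis (i) holds and the Corollary gives invertibility, with inverse $W_{1/\psi\circ\phi^{-1},\,\phi^{-1}}$.

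I do not expect a genuine obstacle: the only points specific to $S^p$ are that antiderivatives of $H^\infty(\D)$ functions lie in $S^p$ and that $S^p$ is automorphism invariant, both routine, and once these are settled the theorem is immediate from Section~2. If anything warrants a word of care it is hypothesis (iv), but it has already been observed in the text that $S^p$ contains functions that continue analytically across no point of $\partial\D$, so this too is in hand.
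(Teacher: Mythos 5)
Your proposal is correct and follows essentially the same route as the paper: verify hypotheses (ii)--(iv) of Theorems~\ref{MT1} and \ref{MT2} for $S^p$ (using $z\mapsto z$, the constant $1$, and an antiderivative of a Blaschke product whose zeros accumulate at every point of $\partial\D$), note automorphism invariance, and invoke the corollary to those theorems. The extra details you supply (the chain-rule check of automorphism invariance and the non-continuability of the antiderivative) are exactly the routine verifications the paper leaves implicit.
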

  
   We turn now to weighted Hardy spaces.    Because any weighed Hardy space $H^2(\beta)$ contains the function $z\mapsto z$ as well as the constant function $z\mapsto 1$, the only issue to consider in attempting to apply either of the theorems of the preceding section is that of existence of functions in $H^2(\beta)$ that don't extend analytically to neighborhoods of points on the unit circle.   Because $H^2(\beta)$ is rotation invariant, if there is a function that does not extend analytically to a neighborhood of some point $\zeta$ on the unit circle,  the same will be true of every point on the unit circle.  Consider,
$$
f(z) = \sum_{j=1}^\infty \frac{z^j}{j\beta(j)},
$$
which belongs to $H^2(\beta)$.  Suppose that for each point $\zeta\in\partial \D$, this function $f$ analytically extends to a neighborhood of $\zeta$, then $f$ would be analytic on a disk having radius larger than $1$, making $\limsup (j\beta(j))^{-1/j} < 1$, which implies $\liminf (\beta(j))^{1/j} > 1$.    Thus if $\liminf \beta(j)^{1/j} = 1$, then $f$ must fail to extend analytically to a neighborhood of some point on $\partial \D$.   Keeping in mind that $H^2(\beta)$ is rotation invariant, we see that if  $\liminf \beta(j)^{1/j} = 1$,  then $X = H^2(\beta)$ satisfies condtion (iv) of Theorems~\ref{MT1} and \ref{MT2}.    

Thus we have
\begin{corollary} \label{BC} Suppose that $H^2(\beta)$ is a weighed Hardy space such that  
\begin{equation}
\liminf \beta(j)^{1/j}=1
\end{equation}
and that $W_{\psi,\phi}$ takes $H^2(\beta)$ into itself.  If  $W_{\psi, \phi}$ is invertible on  $H^2(\beta)$; then $\phi$ must be an automorphism of $\D$.
\end{corollary}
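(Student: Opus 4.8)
The plan is to obtain Corollary~\ref{BC} as an immediate application of Theorem~\ref{MT2} with $X=H^2(\beta)$; essentially all the work lies in verifying the four hypotheses of that theorem. Hypothesis (i) is the standing assumption that $W_{\psi,\phi}$ takes $H^2(\beta)$ into itself. Hypothesis (ii) holds because the constant function $1$ lies in $H^2(\beta)$ (its only nonzero Maclaurin coefficient is $\hat f(0)=1$, and $\beta(0)=1$), and hypothesis (iii) holds because $z\mapsto z$ has the form $z\mapsto z+c$ with $c=0$ and $\|z\|=\beta(1)<\infty$. So the only point requiring a genuine argument is hypothesis (iv): the existence of a dense subset $S$ of $\partial\D$ such that over each point of $S$ some element of $H^2(\beta)$ fails to extend analytically.

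For (iv) I would take $S=\partial\D$ itself: exhibit a single function in $H^2(\beta)$ whose analytic continuation fails at one boundary point, and then spread it around the circle using rotation invariance. Consider
\[
f(z)=\sum_{j=1}^\infty \frac{z^j}{j\,\beta(j)}.
\]
Since $\sum_{j\ge1}\bigl|1/(j\beta(j))\bigr|^2\beta(j)^2=\sum_{j\ge1}1/j^2<\infty$, we have $f\in H^2(\beta)$. If $f$ extended analytically to a neighborhood of every point of $\partial\D$, then finitely many of the extension disks would cover $\partial\D$ by compactness, and (the extensions agreeing on overlaps by the identity theorem) $f$ would be analytic on a disk $\{|z|<1+\varepsilon\}$ for some $\varepsilon>0$; by the Cauchy--Hadamard formula this forces $\liminf_j\bigl(j\beta(j)\bigr)^{1/j}\ge1+\varepsilon$, and since $j^{1/j}\to1$ we would get $\liminf_j\beta(j)^{1/j}\ge1+\varepsilon>1$, contradicting the hypothesis. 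Hence $f$ fails to extend analytically to a neighborhood of some point $\zeta_0\in\partial\D$. Now $H^2(\beta)$ is rotation invariant: for a unimodular constant $\lambda$, replacing $g(z)$ by $g(\lambda z)$ multiplies each Maclaurin coefficient by a unimodular scalar and hence is an isometric bijection of $H^2(\beta)$ onto itself. Thus, given an arbitrary $\zeta\in\partial\D$, setting $\lambda=\zeta_0\overline{\zeta}$ the function $z\mapsto f(\lambda z)$ lies in $H^2(\beta)$ and, because $f$ does not extend across $\lambda\zeta=\zeta_0$, it does not extend analytically to a neighborhood of $\zeta$. So (iv) holds with $S=\partial\D$.

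With (i)--(iv) in hand, Theorem~\ref{MT2} applied to $W_{\psi,\phi}$ on $X=H^2(\beta)$ yields that $\phi$ is an automorphism of $\D$, which is exactly the conclusion of the corollary. I do not anticipate a genuine obstacle: the argument is mostly bookkeeping, the one substantive ingredient being the non-extendability in (iv), which is a short root-test computation together with compactness of $\partial\D$ and the rotation invariance of $H^2(\beta)$. (If one preferred to dispense with the rotation argument, one could instead take a lacunary series such as $\sum_k z^{2^k}/(2^k\beta(2^k))$, which lies in $H^2(\beta)$ and has radius of convergence $1$, and invoke the Hadamard gap theorem to obtain failure of analytic continuation across every boundary point at once; but this refinement is not needed.)
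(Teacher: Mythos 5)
Your proposal is correct and follows essentially the same route as the paper: verify hypotheses (i)--(iv) of Theorem~\ref{MT2} for $X=H^2(\beta)$, using the same function $f(z)=\sum_{j\ge1} z^j/(j\beta(j))$ together with the root test and rotation invariance of $H^2(\beta)$ to establish (iv). The only differences are expository (you spell out the compactness/identity-theorem step and the explicit rotation), so there is nothing substantive to add.
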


Choosing $\psi \equiv 1$ in the preceding corollary, we see that only automorphisms can induce invertible composition operators on weighted Hardy spaces for which $\liminf \beta(j)^{1/j}=1
$ holds.   Previously obtained conditions on $H^2(\beta)$, such as \cite[Theorems 1.6 and 2.15]{CMB}, \cite[Theorem 2]{MP}, and \cite[Theorem 2.0.1]{GGPP},  have required conditions like that of  \cite[Theorem 2]{MP}:
$$
\sum_{n=0}^\infty \frac{n^{2k}}{\beta(n)^2} = \infty \quad \text{for some}\ k \ge 0,
$$
which implies $\liminf \beta(j)^{1/j} = 1$ (in view of the fact that $\liminf \beta(j)^{1/j} \ge 1$ for any weighted Hardy space).

If the weighted Hardy space $H^2(\beta)$ of Corollary~\ref{BC} is automorphism invariant, then Corollary~\ref{BC} immediately yields a characterization of invertibility of composition operators and weighted composition operators, which is stated below as Theorem~\ref{IAI}.  Conditions assuring that $H^2(\beta)$ is automorphism invariant may be found in, e.g., \cite[Theorem 3.3]{MSS}.  For instance, $\beta(j) = (j+1)^\alpha$ for some real number $\alpha$ is sufficient to ensure that $H^2(\beta)$ is automorphism invariant. 

\begin{theorem}\label{IAI}   Suppose that $H^2(\beta)$ is an automorphism invariant weighted Hardy space such that $\liminf \beta(j)^{1/j}=1$.  Then $C_\phi: H^2(\beta)\rightarrow H^2(\beta)$ is invertible iff $\phi$ is an automorphisms of $\D$ and $W_{\psi, \phi}:H^2(\beta)\rightarrow H^2(\beta)$ is invertible iff $\phi$ is an automorphism and both $\psi$ and $1/\psi$ are multipliers of $H^2(\beta)$.  
\end{theorem}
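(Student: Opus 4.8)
The plan is to deduce Theorem~\ref{IAI} from the corollary to Theorems~\ref{MT1} and~\ref{MT2} together with Corollary~\ref{BC}, specialized to $X=H^2(\beta)$; so the first task is to confirm that $H^2(\beta)$ satisfies hypotheses (i)--(iv) of both Theorem~\ref{MT1} and Theorem~\ref{MT2}. Hypothesis (i) is part of the assertion being analyzed: to say $C_\phi$ (resp.\ $\W$) is invertible on $H^2(\beta)$ presupposes that it maps $H^2(\beta)$ into itself. For Theorem~\ref{MT1} the monomial $z\mapsto z\in H^2(\beta)$ is univalent, nonconstant, and analytic on a neighborhood of $\overline{\D}$, giving (ii) and (iii); for Theorem~\ref{MT2}, the constant $1\in H^2(\beta)$ gives (ii) and $z\mapsto z$ (take $c=0$) gives (iii). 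Hypothesis (iv) for $X=H^2(\beta)$ is exactly what was verified in the paragraph preceding Corollary~\ref{BC}: rotation invariance of $H^2(\beta)$ together with the function $\sum_{j=1}^{\infty}z^j/(j\beta(j))\in H^2(\beta)$ shows that when $\liminf\beta(j)^{1/j}=1$ there is, at every boundary point, a function in $H^2(\beta)$ failing to continue analytically across that point. This is the only place the hypothesis $\liminf\beta(j)^{1/j}=1$ is used.

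With the hypotheses verified, the forward implications are immediate. If $C_\phi$ is invertible on $H^2(\beta)$, then Corollary~\ref{BC} with $\psi\equiv1$ gives that $\phi$ is an automorphism of $\D$. If $\W$ is invertible on $H^2(\beta)$, Corollary~\ref{BC} again shows $\phi$ is an automorphism; since $H^2(\beta)$ is automorphism invariant, $C_{\phi^{-1}}$ is a (bounded, by the closed graph theorem) invertible operator on $H^2(\beta)$, with inverse $C_\phi$, and the composite $\W C_{\phi^{-1}}$ equals the multiplication operator $M_\psi$ because $\psi\cdot((f\circ\phi^{-1})\circ\phi)=\psi\cdot f$. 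Hence $M_\psi$ carries $H^2(\beta)$ into itself---so $\psi$ is a multiplier---and is invertible as a composite of invertible operators; an invertible multiplication operator necessarily has zero-free symbol (apply surjectivity to the constant $1$) and satisfies $M_\psi^{-1}=M_{1/\psi}$, whence $1/\psi$ is a multiplier too.

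For the reverse implications, suppose $\phi$ is an automorphism of $\D$. Automorphism invariance of $H^2(\beta)$ gives that $C_\phi$ and $C_{\phi^{-1}}$ both map $H^2(\beta)$ into itself; each is bounded by the closed graph theorem, since norm convergence in a weighted Hardy space forces pointwise convergence on $\D$ ($H^2(\beta)$ being a functional Hilbert space), and $C_\phi C_{\phi^{-1}}=C_{\phi^{-1}}C_\phi=I$, so $C_\phi$ is invertible. If, in addition, $\psi$ and $1/\psi$ are multipliers of $H^2(\beta)$, then $M_\psi$ and $M_{1/\psi}$ map $H^2(\beta)$ into itself, are bounded (closed graph theorem again), and are mutually inverse, so $M_\psi$ is invertible; hence $\W=M_\psi C_\phi$ is a product of invertible operators and is therefore invertible. (Because all these operators are everywhere defined on the Hilbert space $H^2(\beta)$ with closed graph, ``invertible'' and ``bijective'' coincide here, reconciling the operator-theoretic statement with the set-theoretic usage in the corollary to Theorems~\ref{MT1} and~\ref{MT2}.)

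I expect no genuine obstacle: the substance is entirely contained in Theorems~\ref{MT1} and~\ref{MT2} and Corollary~\ref{BC}. The two points deserving a sentence of care are (a) checking hypothesis (iv) for $H^2(\beta)$ under $\liminf\beta(j)^{1/j}=1$, dispatched by the test function displayed above, and (b) the routine observation that a composition symbol or a multiplier carrying $H^2(\beta)$ into itself induces a bounded operator, so that operator-theoretic invertibility is the same as bijectivity. Should one wish to avoid (b) altogether, one can instead argue algebraically, verifying directly---as noted after the corollary to Theorem~\ref{MT2}---that $\W^{-1}=W_{1/(\psi\circ\phi^{-1}),\,\phi^{-1}}$, and then reading off from the requirement that this formula take values in $H^2(\beta)$ that $\psi$ and $1/\psi$ are multipliers.
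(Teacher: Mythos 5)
Your proposal is correct and follows essentially the same route as the paper: verify hypotheses (ii)--(iv) of Theorems~\ref{MT1} and \ref{MT2} for $X=H^2(\beta)$ (with (iv) coming from the test function $\sum_j z^j/(j\beta(j))$ and rotation invariance, exactly as in the paragraph preceding Corollary~\ref{BC}), apply Corollary~\ref{BC} for necessity of $\phi$ being an automorphism, and use automorphism invariance plus the factorization $\W=M_\psi C_\phi$ for the multiplier characterization, as in the Corollary following Theorem~\ref{MT2}. The only difference is that you spell out the routine details (closed graph/open mapping considerations and $M_\psi^{-1}=M_{1/\psi}$) that the paper leaves implicit.
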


Multipliers of weighted Hardy spaces are necessarily bounded analytic functions on $\D$.    It's easy to check  that for $w\in \D$, the function $K_w(z) = \sum_{j=0}^\infty (\bar{w}z)^j/\beta(j)^2$ belongs to $H^2(\beta)$ and is the reproducing kernel at $w$ for $H^2(\beta)$:
$$
\langle f, K_w\rangle = f(w) \quad \text{for each}\ f \in H^2(\beta).
$$
Thus (norm) convergence of a sequence of functions in $H^2(\beta)$ yields pointwise convergence on $\D$.  Applying the closed graph theorem, we see that if $h$ is a multiplier of $H^2(\beta)$, then the multiplication operator $M_h$ on $H^2(\beta)$, defined by $M_h f = hf$, is bounded on $H^2(\beta)$.   Now observe that for each $w\in \D$, $\overline{h(w)}$ is an eigenvalue with corresponding eigenfunction $K_w$ for $M_h^*: H^2(\beta)\rightarrow H^2(\beta)$.  Hence, the image of $\D$ under $\bar{h}$ is in the spectrum of $M_h$ and hence if $h$ is a multiplier of $H^2(\beta)$, then $h$ is a bounded analytic function on $\D$ with $\sup\{|h(z)|: z\in \D\}\le \|M_h\|$.    Conversely, integral representations of norms for some of the weighted Hardy spaces make it clear that every bounded analytic function must be a multiplier of the space in question.   Consider the spaces $H^2(\beta_{
\alpha})$ of $\D$, $\alpha \ge -1$, where $\beta_\alpha(j)^2 = (j+1)^{-1-\alpha}$ for all $j\ge0$.  $H^2(\beta_{-1})$ is the classical Hardy space $H^2(\D)$ whose norm is given by (\ref{HSD}) with $p =2$.  The spaces $H^2(\beta_\alpha)$ with $\alpha > -1$ are standard-weight Bergman spaces having equivalent norm
$$
\|f\|_{H^2(\beta_\alpha)}^2 = \frac{1}{\pi} \int_0^{2\pi}\int_0^1 |f(re^{i\theta})|^2(1-r^2)^\alpha r\, dr\, d\theta
$$
(see, \cite{TM}).  The integral forms for the norms on the spaces $H^2(\beta_{\alpha})$, $\alpha \ge -1$,  make is clear that any bounded analytic function on $\D$ is a multiplier of the spaces.  Thus we obtain the following (cf.\ \cite[Theorem 2.0.1]{GGPP}) as a corollary of our work.

\begin{theorem} The weighted composition operator $W_{\psi, \phi}$ is invertible on $H^2(\beta_\alpha)$ for some $\alpha \ge -1$, iff $\phi$ is an automorphism of $\D$ and $\psi$ is both bounded and bounded away from $0$ on $\D$.  
\end{theorem}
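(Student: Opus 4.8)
The plan is to deduce this directly from Corollary~\ref{BC}, Theorem~\ref{IAI}, and the description of multipliers recorded in the paragraph preceding the statement; fix $\alpha \ge -1$ and prove the equivalence for that particular space, after which the ``for some $\alpha$'' formulation follows because the right-hand condition does not involve $\alpha$. First I would check the two structural hypotheses on $H^2(\beta_\alpha)$. Since $\beta_\alpha(j) = (j+1)^{-(1+\alpha)/2}$, we have $\beta_\alpha(j)^{1/j} = (j+1)^{-(1+\alpha)/(2j)} \to 1$, so $\liminf \beta_\alpha(j)^{1/j} = 1$ and Corollary~\ref{BC} applies to $H^2(\beta_\alpha)$; moreover the weight has the form $(j+1)^a$ with $a = -(1+\alpha)/2$, so by the criterion cited just before Theorem~\ref{IAI} the space $H^2(\beta_\alpha)$ is automorphism invariant, and hence Theorem~\ref{IAI} applies as well.

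For the forward implication, suppose $W_{\psi,\phi}$ is invertible on $H^2(\beta_\alpha)$. Then Theorem~\ref{IAI} gives at once that $\phi$ is an automorphism of $\D$ and that both $\psi$ and $1/\psi$ are multipliers of $H^2(\beta_\alpha)$. By the argument recorded in the paragraph before the statement (reproducing kernels, the closed graph theorem, and the fact that $\overline{h(w)}$ is an eigenvalue of $M_h^*$ with eigenfunction $K_w$), every multiplier of a weighted Hardy space is a bounded analytic function on $\D$. Applying this to $\psi$ shows $\psi$ is bounded on $\D$, and applying it to $1/\psi$ shows $\psi$ is bounded away from $0$ on $\D$.

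For the converse, suppose $\phi$ is an automorphism of $\D$ and $\psi$ is bounded and bounded away from $0$; then $1/\psi$ is also a bounded analytic function on $\D$. The integral representations of the norms on $H^2(\beta_\alpha)$ --- the Hardy norm $(\ref{HSD})$ with $p=2$ when $\alpha = -1$, and the standard-weight Bergman norm with weight $(1-r^2)^\alpha$ when $\alpha > -1$ --- make it immediate that every bounded analytic function on $\D$ is a multiplier of $H^2(\beta_\alpha)$; in particular $M_\psi$ and $M_{1/\psi}$ are bounded on $H^2(\beta_\alpha)$. Since $H^2(\beta_\alpha)$ is automorphism invariant, $C_\phi$ and $C_{\phi^{-1}}$ both map the space into itself, so $W_{\psi,\phi} = M_\psi C_\phi$ is bounded on $H^2(\beta_\alpha)$. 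A direct computation, exactly as in the remark following the corollary to Theorems~\ref{MT1} and \ref{MT2}, shows that its two-sided inverse is $W_{1/\psi\circ\phi^{-1},\,\phi^{-1}} = M_{1/\psi\circ\phi^{-1}}C_{\phi^{-1}}$, which is bounded because $1/\psi\circ\phi^{-1}$ is again a bounded analytic function on $\D$. Hence $W_{\psi,\phi}$ is invertible on $H^2(\beta_\alpha)$, completing the equivalence.

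Essentially no obstacle remains beyond bookkeeping: the substantive content has already been carried by Theorems~\ref{MT1}--\ref{MT2} and their corollaries. The one place where the concrete spaces $H^2(\beta_\alpha)$ (rather than the abstract weighted-Hardy framework) are genuinely used is the identification of the multiplier algebra with $H^\infty(\D)$ --- the inclusion $H^\infty(\D) \subseteq \mathrm{Mult}(H^2(\beta_\alpha))$ coming from the integral norms and the reverse inclusion from the spectral argument above --- so if anything is worth stating carefully it is that this multiplier algebra is exactly $H^\infty(\D)$ for each $\alpha \ge -1$.
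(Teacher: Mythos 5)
Your proposal is correct and follows essentially the same route as the paper: the theorem is obtained as an immediate consequence of Theorem~\ref{IAI} (via $\liminf\beta_\alpha(j)^{1/j}=1$ and automorphism invariance of $H^2(\beta_\alpha)$) together with the identification of the multiplier algebra of $H^2(\beta_\alpha)$ with the bounded analytic functions, using the reproducing-kernel/adjoint argument in one direction and the integral forms of the norms in the other. Your explicit verification of the inverse $W_{1/\psi\circ\phi^{-1},\,\phi^{-1}}$ matches the remark following the corollary to Theorems~\ref{MT1} and \ref{MT2}, so nothing essential differs.
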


What happens if $\liminf \beta(j)^{1/j} > 1$? Here, again, it turns out that invertibility of $W_{\psi,\phi}$ implies $\phi$ is  an automorphism---in fact a rotation $z\mapsto \zeta z$ for some $\zeta\in \partial \D$.   

\begin{theorem}\label{WHST}  Suppose that $W_{\psi,\phi}$ is a bounded, invertible operator on the weighted Hardy space $H^2(\beta)$; then $\phi$ must be an automorphism of $\D$. Moreover, if \linebreak $\liminf \beta(j)^{1/j} > 1$, then $\phi$ must be a rotation automorphism.
\end{theorem}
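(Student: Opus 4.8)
The plan is to peel off the case $R:=\liminf_j\beta(j)^{1/j}=1$, where Corollary~\ref{BC} already gives that $\phi$ is an automorphism; so I would assume $R>1$ and prove directly the stronger statement that $\phi$ is a rotation (note Theorems~\ref{MT1} and~\ref{MT2} are unavailable, since their hypothesis (iv) fails for $H^2(\beta)$ precisely when $R>1$). The starting point is that $H^2(\beta)$ is a space of functions analytic on the \emph{enlarged} disk $R\D:=\{|z|<R\}$: if $f=\sum\hat f(j)z^j\in H^2(\beta)$ then $|\hat f(j)|\beta(j)\to0$, so $\limsup|\hat f(j)|^{1/j}\le 1/R$. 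Moreover $H^2(\beta)$ is rotation-invariant and contains $\sum_{j\ge1}z^j/(j\beta(j))$, whose radius of convergence equals $R$; hence, when $R<\infty$, for every $\zeta$ with $|\zeta|=R$ some function in $H^2(\beta)$ does not continue analytically past $\zeta$.

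Next I would record the consequences of invertibility used in the proof of Theorem~\ref{MT2}: there are $f_1,f_2\in H^2(\beta)$ with $\psi\cdot(f_1\circ\phi)=1$ and $\psi\cdot(f_2\circ\phi)=z$, so $\psi=1/(f_1\circ\phi)$ is zero-free on $\D$, $\phi$ is univalent, and $q:=f_2/f_1$ satisfies $q\circ\phi=\mathrm{id}$ on $\D$ and is meromorphic on $R\D$ (as $f_1,f_2$ are analytic there). Since $\psi=W_{\psi,\phi}1$, $\psi\phi^n=W_{\psi,\phi}z^n$, and $q^nf_1=W_{\psi,\phi}^{-1}z^n$ all lie in $H^2(\beta)$ and hence are analytic on $R\D$, a pole-order count rules out poles of $\phi$ and of $q$ on $R\D$: at a zero of $\psi$ of order $m$, a pole of $\phi$ of order $p\ge1$ would force $\psi\phi^n$ to have order $m-np<0$ for large $n$, and likewise for $q$. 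So $\phi$ and $q$ extend analytically to $R\D$. If $\phi(R\D)$ were not contained in $R\D$, then, $\phi(R\D)$ being connected and meeting $\D$, the level set $\{z\in R\D:|\phi(z)|=R\}$ would be nonempty and, by the maximum and minimum modulus principles, without isolated points — hence uncountable, so it would meet the complement of the discrete zero set of $\psi$; picking $z_0$ there and $f\in H^2(\beta)$ singular at $\phi(z_0)$ makes $W_{\psi,\phi}f=\psi\cdot(f\circ\phi)$ singular at $z_0\in R\D$, contradicting $W_{\psi,\phi}f\in H^2(\beta)$. The same reasoning, with $W_{\psi,\phi}^{-1}=W_{f_1,q}$, gives $q(R\D)\subseteq R\D$ (all of this past the pole count is vacuous when $R=\infty$). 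Now $q\circ\phi=\mathrm{id}$ and $\phi\circ q=\mathrm{id}$ hold on nonempty open subsets of $R\D$ and so, by the identity theorem, on all of $R\D$: $\phi$ is an automorphism of $R\D$ with inverse $q$, still satisfying $\phi(\D)\subseteq\D$.

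To finish, suppose first $R<\infty$. Then $\D$ is a hyperbolic ball of $R\D$ centered at $0$, and $\phi$, being an automorphism of $R\D$, is a hyperbolic isometry, so $\phi(\D)$ is a hyperbolic ball of the same radius centered at $\phi(0)$; since $\phi(\D)\subseteq\D$ and two hyperbolic balls of equal radius with one inside the other must coincide, $\phi(\D)=\D$ and $\phi(0)=0$, whence $\phi(z)=\zeta z$, $|\zeta|=1$, by the Schwarz lemma on $R\D$. If instead $R=\infty$, then $\phi$ is an injective entire map, hence affine, $\phi(z)=az+b$ with $|a|+|b|\le1$ (from $\phi(\D)\subseteq\D$); here the reproducing kernel $K_v$ exists at every $v\in\C$ (as $\beta(j)$ grows faster than any geometric sequence), and from $W_{\psi,\phi}^*K_v=\overline{\psi(v)}K_{\phi(v)}$, the lower boundedness $\|W_{\psi,\phi}^*h\|\ge c\|h\|$, and $\psi(v)=1/f_1(\phi(v))$, one obtains $|f_1(u)|^2\le c^{-2}\,\kappa(|u|)/\kappa(|q(u)|)$ for all $u\in\C$, where $\kappa(r):=\sum_j r^{2j}/\beta(j)^2=\|K_r\|^2$. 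Because $\kappa(r)\ge r^{2J}/\beta(J)^2$ for every $J$, $\kappa$ grows faster than any power of $r$, so $\kappa(sr)/\kappa(r)\to\infty$ as $r\to\infty$ for each fixed $s>1$; if $|a|<1$ then $|q(u)|=|u-b|/|a|$ eventually exceeds $|u|/\sqrt{|a|}$, forcing $f_1(u)\to0$ and hence $f_1\equiv0$ by Liouville — impossible since $f_1=W_{\psi,\phi}^{-1}1\ne0$. Thus $|a|=1$, $b=0$, and $\phi$ is a rotation.

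The step I expect to be the crux is the middle one: using invertibility to promote $\phi$ from an analytic self-map of $\D$ to an automorphism of the enlarged disk $R\D$ — in particular, ruling out that $\phi$ carries part of $R\D$ outside $R\D$, which is where the singular members of $H^2(\beta)$ (and the hypothesis $R>1$) really get used. A secondary wrinkle is that the tidy hyperbolic-geometry finish collapses when $R=\infty$ and must be replaced by the reproducing-kernel growth estimate.
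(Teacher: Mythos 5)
In your crux step you assume $\phi(R\D)\not\subseteq R\D$, pick a point $z_0$ of the level set $\{z\in R\D:|\phi(z)|=R\}$ subject only to $\psi(z_0)\neq 0$, and assert that $W_{\psi,\phi}f=\psi\cdot(f\circ\phi)$ is then ``singular at $z_0$'' when $f$ is singular at $\phi(z_0)$. As written this does not follow. The element $W_{\psi,\phi}f$ of $H^2(\beta)$ is automatically analytic on all of $R\D$; the identity $W_{\psi,\phi}f=\psi\cdot(f\circ\phi)$ is known only on $\D$, and by the identity theorem it propagates only to the connected component $U$ of $\{z\in R\D:|\phi(z)|<R\}$ containing $\D$. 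If $z_0\notin\overline{U}$, nothing near $z_0$ ties the continuation of $W_{\psi,\phi}f$ to the formula $\psi\cdot(f\circ\phi)$, and no contradiction results from your choice of $f$. Moreover, even at an accessible $z_0$, to convert analyticity of $(W_{\psi,\phi}f)/\psi$ near $z_0$ into an analytic extension of $f$ across $\phi(z_0)$ you need $\phi$ to be locally invertible there, i.e.\ $\phi'(z_0)\neq0$, a condition you never impose. This accessibility issue is precisely what the paper's uncountable family of curves $\gamma_x$ is engineered to handle: each first-hitting point $z_x$ comes equipped with an arc $\gamma_x[0,r_x)$ lying in the sublevel region and joining it to $\D$, along which the functional identity is valid, and the point $z_*$ is chosen with $\tilde\phi'(z_*)\neq0$ and $\tilde\psi(z_*)\neq0$.

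The gap is repairable inside your framework: apply your perfect-set device to $\partial U\cap R\D$ rather than to the whole level set. That set is nonempty (a path from $\D$ to a point where $|\phi|\geq R$ first leaves $U$ at such a point), closed in $R\D$, contained in $\{|\phi|=R\}$, and the same maximum/minimum-modulus argument shows it has no isolated points, hence it is uncountable; choosing $z_0$ there and off the discrete zero sets of both $\psi$ and $\phi'$, your contradiction goes through (a small disk about $\phi(z_0)$ meets $R\D$ in a convex, hence connected, set, so the local formula $(W_{\psi,\phi}f/\psi)\circ\phi^{-1}$ really extends $f$). The same caveat and the same fix apply to your claim $q(R\D)\subseteq R\D$ via $W_{\psi,\phi}^{-1}=W_{f_1,q}$, which should anyway be read as an identity on the appropriate component since one does not know a priori that $q(\D)\subseteq R\D$. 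The rest of your argument is correct and genuinely different from the paper in attractive ways: you obtain the automorphism property by exhibiting the inverse $q=f_2/f_1$ directly instead of re-invoking Theorem~\ref{MT2} on the enlarged disk; your hyperbolic-ball/Schwarz finish replaces the paper's explicit computation with automorphisms of $t\D$; and at $R=\infty$ your single reproducing-kernel estimate rules out $|a|<1$ (hence forces $b=0$) in one stroke, where the paper treats nonconstant and constant $\psi$ separately.
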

\begin{proof}   We have already shown the result is valid when $\liminf \beta(j)^{1/j} = 1$ (Corollary~\ref{BC}).   We now consider the case where
$\liminf \beta(j)^{1/j} = t > 1$. In this case, it's easy to see that every function in $H^2(\beta)$ extends to be analytic on the disk $t\D$ and that 
$$
f(z) = \sum_{j=0}^\infty \frac{z^j}{j\beta(j)}
$$
belongs to $H^2(\beta)$, but does not extend to be analytic on a disk of radius larger than $t$.  Thus,  there is a point $\mu$ in $\{z: |z| = t\}$  such that $f$  fails to have analytic extension to a neighborhood of $\mu$.    Let  $X = H^2(\beta)$ and note that because $z\mapsto z$ as well as $z\mapsto 1$ are in $X$ and $X$ is rotation invariant, hypotheses (ii)--(iv)  of Theorem~\ref{MT2} are satisfied with $t\D$ replacing $\D$ and we can take $S = \{z: |z| = t\}$.    We will show hypothesis (i) is valid too for appropriate extensions of $\psi$ and $\phi$ to $t\D$.  Specifically,  we will show  $\psi$ has an analytic extension $\tilde{\psi}$ to $t\D$ and $\phi$ has an analytic extension $\tilde{\phi}$ to $t\D$ that is a selfmap of $t\D$. Because $W_{\psi,\phi}$ is an invertible operator on $H^2(\beta)$, it follows that $W_{\tilde{\psi}, \tilde{\phi}}$ is an invertible mapping on the set $X$. 

 By hypothesis,  $\phi$ is a selfmap of $\D$ and $\psi$ is an analytic function on $\D$  such that $W_{\psi,\phi}$ is invertible on $H^2(\beta)$. Since $z\mapsto 1$ is in $H^2(\beta)$ and $H^2(\beta)$ is invariant under $W_{\psi,\phi}$, then $\psi = W_{\psi,\phi} 1$ is in $H^2(\beta)$, which means $\psi$ must extend analytically to $t\D$. We use $\tilde{\psi}$ to denote this extension.    Because $W_{\psi,\phi}$ is invertible on $H^2(\beta)$, there is a function $g\in H^2(\beta)$ such that $\psi\cdot g\circ \phi = 1$ on $\D$, which means $\psi$ is nonzero on $\D$ so that $\tilde{\psi}$ is certainly not the zero function on $t\D$. Because $z\mapsto z$ is in $H^2(\beta)$, we see that $\psi \phi$ is in $H^2(\beta)$ and extends to a function $\tilde{q}$ analytic on $t\D$, and thus $\phi$ extends to a meromorphic function $\tilde{q}/\tilde{\psi}$ on $t\D$. However, we claim  $\tilde{q}/\tilde{\psi}$ cannot have any poles, so that $\phi$ has analytic extension $\tilde{\phi}$ to $t\D$.  If  $\tilde{q}/\tilde{\psi}$ had poles in $t\D$, then for sufficiently large positive integers $n$, the function $\tilde{\psi}\cdot(\tilde{q}/\tilde{\psi})^n$ would have poles.  But  $\tilde{\psi}\cdot(\tilde{q}/\tilde{\psi})^n$ agrees with  $\psi \phi^n= W_{\psi,\phi}z^n$ on $\D$, and because $z\mapsto z^n$ is in $H^2(\beta)$, we see that  $\psi \phi^n$ has analytic extension to $t\D$. This  extension must agree with $\tilde{\psi}\cdot(\tilde{q}/\tilde{\psi})^n$ except at any singularities of $\tilde{\psi}\cdot(\tilde{q}/\tilde{\psi})^n$ and thus those singularities must be removable.  We have proved our claim:  $\phi$ has analytic extension $\tilde{\phi}$ to $t\D$.    
 
Suppose, in order to obtain a contradiction that at some point $w_0$ of $t\D$, $|\tilde{\phi}(w_0)| > t$.  For each $x\in \R$, consider an uncountable collection of curves $\{\gamma_x\}_{x\in \R}$ such that 
\begin{itemize}
\item[(i)] $\gamma_x: [0,1]\rightarrow t\D$ has endpoints $0$ and $w_0$ and 
\item[(ii)] $\gamma_x([0,1])\cap \gamma_y([0,1]) = \{0, w_0\}$ when $x\ne y$.  
\end{itemize}
For each $x\in \R$, let $r_x = \inf\{r\in [0,1]: |\tilde{\phi}(\gamma_x(r))| \ge t\}$, and note that because $\tilde{\phi}\circ \gamma_x$ is continuous on $[0,1]$, $|\tilde{\phi}(\gamma_x(r_x))| = t$.  For each $x\in \R$, let $\gamma_x(r_x) = z_x$, a point in $t\D$.  Thus we have an uncountable collection $\{z_x:x\in \R\}\subseteq t\D$ such that the image of each under $\tilde{\phi}$ has modulus $t$.  Because on $t\D$,  $\tilde{\psi}$ is not the zero function  and $\tilde{\phi}$ is not constant (it agrees with the selfmap $\phi$ of $\D$ and takes values outside $\D$), there must be a real number $*$ such that $z_*$ is a point at which $\tilde{\phi}$ has nonzero derivative and $\tilde{\psi}$ is nonzero.  Because $\tilde{\phi}$ has nonzero derivative at $z_*$, it is invertible on a neighborhood of $z_*$ with inverse $\tilde{\phi}^{-1}$. Thus,  there is an open disk $D_1$ about $\tilde{\phi}(z_*)$ such that $\tilde{\phi}^{-1}$ is defined on $D_1$ and maps $D_1$ to a neighborhood of $z_*$ contained in $t\D$ on which $\tilde{\psi}$ is nonzero.   Let $g$ be a function in $H^2(\beta)$ that fails to have analytic extension to any neighborhood of the point $\tilde{\phi}(z_*)$, which is on $\partial(t\D)$.  Of course $g$ has analytic extension $\tilde{g}$ to $t\D$, so that $\tilde{g}$ fails to extend analytically to a neighborhood of $\tilde{\phi}(z_*)$.   We know that $\psi \cdot g\circ \phi = h$ where $h$ is in $H^2(\beta)$. Thus $h$ has an analytic extension $\tilde{h}$  to $t\D$ that must agree with $\tilde{\psi} \cdot \tilde{g}\circ\tilde{\phi}$ on $\gamma_*[0,r_*)$.  Note that $\displaystyle{\tilde{g} = \frac{1}{\vstrut \tilde{\psi}\circ \tilde{\phi}^{-1}}\tilde{h}\circ \tilde{\phi}^{-1}}$ on $D_1\cap \tilde{\phi}(\gamma_*[0,r_*))$ and thus that $\displaystyle{\frac{1}{\vstrut \tilde{\psi}\circ \tilde{\phi}^{-1}}\tilde{h}\circ \tilde{\phi}^{-1}}$ is an analytic extension of $\tilde{g}$ to $D_1$, a contradiction.  We conclude that $\tilde{\phi}$ maps $t\D$ into $t\D$; that is,  $\tilde{\phi}$ is a analytic selfmap of $t\D$.

All the hypotheses of Theorem~\ref{MT2} are satisfied for the operator $W_{\tilde{\psi}, \tilde{\phi}}$ with $\D$ replaced by $t\D$ and $X = H^2(\beta)$.  Because the proof of Theorem~\ref{MT2} is not dependent on the radius of the disk in question, we see that $\tilde{\phi}$ must be an automorphism of $t\D$.  However, $\tilde{\phi}$, which agrees with $\phi$ on $\D$, must also be a selfmap of $\D$.   No non-rotational automorphism of $t\D$ can be a selfmap of $\D$:  An automorphism of $t\D$ must take the form 
$$
A(z) = t\zeta \frac{z - tp}{t-\bar{p}z}
$$
for some $p\in \D$ and $\zeta\in \partial \D$. If $A$ is a selfmap of $\D$, then $|A(-p/|p|)| = (t + t^2|p|)/(t+|p|) \le 1 $, making $p = 0$ and $A$ a rotation. Thus $\tilde{\phi}$ and hence $\phi$ takes the form $z\mapsto \zeta z$ for some $\zeta\in \partial \D$.  Hence, in particular, $\phi$ is a automorphism of $\D$, as desired.  

To complete the proof of the theorem, we must consider the case where
$$
\liminf \beta(j)^{1/j} = \infty.
$$
Here all functions in $H^2(\beta)$ extend to be entire functions.  For this argument, we will not introduce new notation for extensions.    Since $z\mapsto 1$ belongs to $H^2(\beta)$, $\psi = W_{\psi,\phi}1$ is in $H^2(\beta)$ and hence $\psi$ represents an entire function.      Also, because $z\mapsto z^n$ belongs to $H^2(\beta)$ for every positive integer $n$, $\psi\phi^n$ is entire for every $n$ and it follows that $\phi$ must be entire.  Because    $W_{\psi, \phi}$ is invertible,  there must be a $g\in H^2(\beta)$ such that $\psi\cdot g\circ \phi = 1$ on $\D$ and hence on $\C$.  Thus $\psi$ is nonzero on $\C$.  Also, there must be a function $q\in H^2(\beta)$ such that $z = \psi \cdot q\circ\phi = (q/g)\circ \phi$ on $\C$.  It follows that $\phi$ is univalent on $\C$.  Because $\phi$ is univalent and entire, $\phi(z) = az + b$ for some constants $a$ and $b$ with $a\ne 0$.  Because $\phi$ is selfmap of $\D$, $|a| + |b| \le 1$.   We show that $b = 0$ and $|a| =1$ to complete the proof. 

 Suppose in order to obtain a contradiction that $b\ne 0$.  Then $|a| <1$.  Because $W_{\psi, \phi}$ is bounded and invertible, its inverse is bounded. It's easy to see that 
$$
\W^{-1} f = \frac{1}{\psi\circ\phi^{-1}} f\circ \phi^{-1}
$$
where $\phi^{-1}(z) = z/a - b/a$.  We obtain the contradiction that $\W^{-1}$ is unbounded on $H^2(\beta)$ (assuming $|a| <1$),   Suppose that $\nu:=1/\psi\circ\phi^{-1}$ is not constant. Then by Liouville's Theorem, there is a sequence $(c_n)$ in $\C$ such that $|c_n|\rightarrow \infty$ and $|\nu(c_n)|\rightarrow \infty$.  Let $k(z) = \sum_{j=0}^\infty z^j/\beta(j)^2$ be the generating function for $H^2(\beta)$, which is entire for our situation.  For any $w\in \C$, $K_w(z) :=k(\bar{w}z)$ belongs to $H^2(\beta)$ and the square of its norm is
$k(|w|^2)$.  Also for all $w\in \C$ and $g\in H^2(\beta)$, we have $\langle g, K_w\rangle =g(w)$.  Note that because $\phi^{-1}(c_n)/c_n$ approaches $1/a$ as $n\rightarrow \infty$,  we have $|\phi^{-1}(c_n)| >|c_n|$ for $n$ sufficiently large.  This means, because $k$ is increasing along the positive real axis, that $\|K_{\phi^{-1}(c_n)}\|\ge \|K_{c_n}\|$ for $n$ sufficiently large; let's say for $n\ge J$.  Consider the unit vector $k_w:=K_w/\|K_w\|$.  Since $W_{\psi, \phi}^{-1}$ is a bounded operator on $H^2(\beta)$ its adjoint is also bounded, so that there is a constant $M$ such that 
$$
M\ge\left\| \left(W_{\psi, \phi}^{-1}\right)^* k_w \right\|
$$
for all $w$.  Choosing $w = c_n$, and considering that  $\left(W_{\psi, \phi}^{-1}\right)^* k_w  = \overline{\nu(w)}K_{\phi^{-1}(w)}/\|K_w\|$, the preceding inequality yields, for $n\ge J$, 
$$
M\ge |\nu(c_n)| \frac{\|K_{\phi^{-1}(c_n)}\|}{\|K_{c_n}\| } = |\nu(c_n)| \frac{k(|\phi^{-1}(c_n)|^2)}{k(|c_n|^2)} \ge |\nu(c_n)|,  
$$
 contradicting $\lim |\nu(c_n)|  = \infty$.

It follows that if $\W^{-1}$ is to be bounded. then $\nu = 1/\psi\circ\phi^{-1}$ must be constant.  Thus $\psi$ is constant and thus $W_{\psi, \phi}$ is simply a constant multiple of a composition operator.  Let $\psi = \alpha$, a constant.  We are assuming that $\alpha C_{\phi}$ is bounded and invertible on $H^2(\beta)$, where $\phi(z) = az + b$, $|a| + |b| \le 1$, and $b\ne 0$ so that $|a| <1$.  Because $\alpha C_\phi$ is bounded and invertible its inverse $(1/\alpha)C_{\phi^{-1}}$ is as well.  We apply this inverse to the unit vector $h(z): = z^n/\beta(n)$ and note that the $n$-th Maclaurin coefficient of $(1/\alpha)h\circ\phi^{-1}$ is $(1/\alpha)(1/a)^n(1/\beta(n))$; thus
$$
\|(1/\alpha)C_{\phi^{-1}} h\|^2 \ge \frac{1}{|\alpha|^2}\frac{1}{|a|^{2n}},
$$
and the quantity on the right of the preceding inequality goes to infinity as $n\rightarrow \infty$.  Thus our assumption that $b\ne 0$ has led to a  contradiction and we see that $\phi(z) =az$ for some $a$ such that $|a|\le 1$.  If $|a|<1$, then the argument just completed again shows that $W_{\psi,\phi}$ cannot have a bounded inverse. Thus $|a| = 1$ and $\phi$ is rotation automorphism, completing the proof of the theorem.
\end{proof}

Because rotations always induce bounded composition operators on $H^2(\beta)$, we conclude that if $H^2(\beta)$ is a weighted Hardy space for which $\liminf \beta(j)^{1/j} > 1$, and $\phi$ is a selfmap of $\D$, then $C_\phi$ is invertible iff $\phi(z) = \zeta z$ for some $\zeta\in \partial \D$. Similarly, we obtain that $W_{\psi,\phi}$ is invertible iff $\phi(z) = \zeta z$ for some $\zeta\in \partial \D$ and $\psi$ as well as $1/\psi$ are multipliers of $H^2(\beta)$.

%    Bibliographies can be prepared with BibTeX using amsplain,
%    amsalpha, or (for "historical" overviews) natbib style.
\bibliographystyle{amsplain}

\end{document}